\documentclass[11pt]{article}
\usepackage{amsthm, amsmath, amssymb, amsfonts, url, booktabs, tikz, setspace, fancyhdr, bm}
\usepackage{hyperref}
\usepackage{geometry}
\usepackage{enumerate}
\usepackage[shortlabels]{enumitem}
\usepackage[babel]{microtype}
\usepackage[english]{babel}
\usepackage{comment}
\usepackage{bbm}
\usepackage{csquotes}
\usepackage{mathabx}
\usetikzlibrary{calc, angles, quotes, intersections}
\usepackage{graphicx}
\usepackage{float}
\usepackage{xcolor}

\counterwithin{figure}{section}

\newtheorem{theorem}{Theorem}[section]

\newtheorem{lemma}[theorem]{Lemma}

\theoremstyle{definition}

\newtheorem*{defn-non}{Definition}

\usepackage[capitalise]{cleveref}

\newlist{Case}{enumerate}{2}
\setlist[Case, 1]{%
    label           =   {\bfseries Case \arabic*.},
    labelindent=1em ,labelwidth=1.3cm, labelsep*=1em, leftmargin =!
}
\setlist[Case, 2]{%
    label           =   {\bfseries Subcase \arabic{Casei}.\arabic*.},
    labelindent=-1em ,labelwidth=1.3cm, labelsep*=1em, leftmargin =!
}

\newcommand*{\abs}[1]{\lvert#1\rvert}

\newcommand{\R}{\mathbb{R}}

\newcommand{\aff}{\operatorname{aff}}

\newcommand{\E}{\mathbb{E}}
\newcommand{\crad}{\operatorname{crad}}
\newcommand{\dir}{\operatorname{dir}}
\newcommand{\canarrow}{\Longrightarrow}

\title{All simplices exhibit canonical Ramsey property}
\author{
Gennian Ge\thanks{School of Mathematical Sciences, Capital Normal University, Beijing 100048, China. Email: gnge@zju.edu.cn. Gennian Ge is supported by the National Key Research and Development Program of China under Grant 2025YFC3409900, the National Natural Science Foundation of China under Grant 12231014, and Beijing Scholars Program.}
\and 
Yang Shu\thanks{School of Mathematical Sciences, University of Science and Technology of China, Hefei, 230026, China. Email: shuyangyyyy@mail.ustc.edu.cn.}
\and
Zixiang Xu\thanks{School of Mathematical Sciences, Zhejiang University, Hangzhou, China. Email: zixiangxu@zju.edu.cn.}
\and
Wenjun Yu\thanks{Institute of Mathematics and Interdisciplinary Sciences, Xidian University, Xi’an, China. Email: yuwenjun@xidian.edu.cn.}
}
\date{}

\begin{document}

\maketitle

\begin{abstract}
We prove that all nondegenerate simplices have the canonical Ramsey property, thereby resolving a central open problem in canonical Euclidean Ramsey theory and providing a canonical counterpart to the celebrated simplex Ramsey theorem of Frankl and R\"{o}dl~[JAMS, 1990].
\end{abstract}

\section{Introduction}
Euclidean Ramsey theory was initiated in the
1970s by Erd\H{o}s, Graham, Montgomery, Rothschild, Spencer, and Straus through
their foundational trilogy~\cite{ErdosEtAl1975II,ErdosEtAl1975III,Erdos1973}.
It asks which finite configurations must occur monochromatically in every
finite coloring of a sufficiently high-dimensional Euclidean space. A finite
configuration is a finite subset of a Euclidean space, and a copy always means
a congruent copy. Let \(\E^n\) be the \(n\)-dimensional Euclidean space equipped with the Euclidean distance. A \emph{\(k\)-point nondegenerate simplex} is an affinely
independent set \(T=\{\boldsymbol{t}_{1},\ldots,\boldsymbol{t}_{k}\}\). A
configuration \(S\) is called \emph{Euclidean Ramsey} if,
for every positive integer \(q\), there is an integer \(n=n(S,q)\) such that
every \(q\)-coloring of \(\E^n\) contains a monochromatic copy of \(S\).
Unlike an abstract Ramsey problem, such a copy must realize all prescribed
pairwise distances simultaneously. This metric rigidity makes the
classification subtle: The authors in~\cite{ErdosEtAl1975II,ErdosEtAl1975III,Erdos1973} proved that every Euclidean Ramsey
configuration is spherical, meaning that it embeds on a sphere, and conjectured
that this necessary condition is also sufficient~\cite{Erdos1973}. Even
the basic cases of three-point patterns and equally spaced collinear sets have
generated substantial work, including planar two-color results for
triangles~\cite{JelinekEtAl2009,Shader1976}, asymmetric Ramsey problems for
lines~\cite{ConlonFox2019,ConlonFuhrer2024,ConlonWu2023}, and recent advances
on Euclidean Ramsey problems for arithmetic progressions
\cite{CurrierMooreYip2024,CurrierMooreYip2026,FuhrerToth2025}. In the
high-dimensional direction related to this paper, Frankl and R\"odl proved
that every nondegenerate simplex is exponentially Ramsey
\cite{frankl1990} and later established stronger Ramsey properties for
simplices~\cite{FranklRodl2004}. This makes simplices a principal positive
class in ordinary Euclidean Ramsey theory and provides the starting point for
the canonical problem considered here.

Euclidean Gallai--Ramsey theory was introduced by Mao, Ozeki, and Wang~\cite{mao2022}. For finite configurations \(K_{1},K_{2}\) and a positive integer \(r\), we write
\[
\E^{n}\xrightarrow{r}(K_{1};K_{2})_{\mathrm{GR}}
\]
if every coloring \(\chi:\E^{n}\to[r]\) contains either a monochromatic copy of \(K_{1}\) or a rainbow copy of \(K_{2}\), where a copy is \emph{rainbow} if its points receive pairwise distinct colors. In~\cite{geher2025} Geh\'{e}r, Sagdeev and T\'{o}th formally gave the following definition: a finite configuration \(S\) has the \emph{canonical Ramsey property}, or is \emph{canonically Ramsey} if there is an integer \(n_{0}=n_{0}(S)\) such that \(\E^{n}\xrightarrow{r}(S;S)_{\mathrm{GR}}\) for every positive integer \(r\) and every \(n\ge n_{0}\).

The essential requirement is that \(n_{0}\) be independent of the number of colors. Thus, unlike in the ordinary Euclidean Ramsey property, the ambient dimension must remain fixed while the number of colors is allowed to grow arbitrarily. For finite configurations \(X,A,T\), write \(X\canarrow(A;T)\) when every coloring of \(X\) with colors from an arbitrary set contains a monochromatic copy of \(A\) or a rainbow copy of \(T\). For finite configurations \(X,S\), write \(X\xrightarrow{q}S\) when every \(q\)-coloring of \(X\) contains a monochromatic copy of \(S\). Since every finite Euclidean configuration embeds in all sufficiently high dimensions, a finite witness \(W\canarrow(S;S)\) implies the canonical Ramsey property.

Cheng and Xu~\cite{chengxu2025} established the first dimension-independent Euclidean Gallai--Ramsey results for various configurations, including squares, right and acute triangles, and several classes of simplices, such as tetrahedra whose four heights all exceed the circumradius of the tetrahedron. Here the height from a vertex of a tetrahedron is its distance from the affine hull of the opposite face. Geh\'er, Sagdeev, and T\'oth~\cite{geher2025} formalized the canonical Ramsey property, proved it for every acute triangle already in \(\E^{3}\), and proved it for all hypercubes. Their results also cover rectangles whose squared aspect ratio is rational. Fang, Ge, Shu, Xu, Xu, and Yang~\cite{fang2025} subsequently proved that every triangle is canonically Ramsey already in \(\E^{4}\) and that every rectangle is canonically Ramsey. They also established the property for every tetrahedron whose largest height exceeds the smallest circumradius among its four triangular faces. Shaw~\cite{shaw2026} later proved that every cuboid is canonically Ramsey.

Among these results, the simplex problem is the central test case for the theory. Simplices are the basic affinely independent Euclidean configurations, and their edge lengths encode the complete metric data of finite point sets. More importantly, Frankl and R\"odl settled the ordinary Euclidean Ramsey problem for every nondegenerate simplex~\cite{frankl1990}, while the canonical theory had previously covered only all triangles and certain restricted families of higher-dimensional simplices. In light of the broader conjecture that every Euclidean Ramsey configuration is canonically Ramsey~\cite{fang2025}, simplices therefore form the first fundamental class for which a complete canonical theorem should be sought. The difficulty is precisely the new uniformity demanded by the canonical setting: one must control the full metric structure of a simplex in a fixed dimension independently of the number of colors.

We resolve this core problem by proving that every nondegenerate simplex has the canonical Ramsey property. 

\begin{theorem}\label{thm:main}
Let \(T\) be a finite nondegenerate simplex. 
There exists a finite configuration \(W=W(T)\) such that
\(
W\canarrow(T;T).
\) Consequently, there is an integer \(n_{0}=n_{0}(T)\) such that \(\E^{n}\xrightarrow{r}(T;T)_{\mathrm{GR}}\) for every positive integer \(r\) and every \(n\ge n_{0}\).
\end{theorem}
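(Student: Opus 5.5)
We argue by induction on the number of vertices \(k\) of \(T\), with a stronger, "rooted" statement as the hypothesis. The case \(k\le 2\) is trivial: a two-point configuration is either monochromatic or rainbow.

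The approach combines three things:
\begin{itemize}
\item the Frankl--R\"odl theorem~\cite{frankl1990}, used as a black box in the form \(X\xrightarrow{q}S\) for spherical simplices \(S\) and any fixed \(q\);
\item a product/lifting construction;
\item a pigeonhole step whose parameters do not depend on the number of colors.
\end{itemize}
The basic observation is this. In an arbitrary coloring with no rainbow copy of \(T\), every copy of \(T\) has two vertices of equal color. So the coloring induces on copies of \(T\) a finite "pattern" coloring: for each copy, record which pairs \(\{i,j\}\) are monochromatic. There are at most \(2^{\binom{k}{2}}\) patterns, a number independent of the number of colors \(r\). This finite pattern coloring is where we apply ordinary Ramsey results.

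The key steps, in order, are as follows.

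\textbf{Step 1: a strengthened hypothesis.} Formulate an auxiliary statement \(W_k\canarrow(T;T)\) that is strong enough to induct. We include a "base sphere" for each face \(T'=T\setminus\{\boldsymbol t_k\}\), so that a copy of \(T'\) extends to many copies of \(T\) whose apex lies on a fixed \((d)\)-sphere around the circumcenter of \(T'\), for \(d\) large.

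\textbf{Step 2: building the witness.} Let \(S\) be the set of possible apex positions over a fixed copy of \(T'\); it is a high-dimensional sphere. Take \(W=W'\times Y\), where:
\begin{itemize}
\item \(W'\) is the inductive witness for \(T'\);
\item \(Y\) is a finite configuration on that sphere with \(Y\xrightarrow{q}C\) for \(q=2^{\binom{k}{2}}\) times a bounded factor.
\end{itemize}
Here \(C\) is a suitable simplex (for instance a regular simplex of appropriate edge length, together with its circumcenter-compatible embeddings). Choosing the edge length is what makes it possible to form copies of \(T\) using apex points from \(Y\).

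\textbf{Step 3: the case analysis.} Fix a coloring with no rainbow \(T\) and assume there is no monochromatic \(T\).
\begin{itemize}
\item First apply the Frankl--R\"odl theorem to the pattern coloring on \(Y\)-fibres. This yields a large sub-simplex on which the pattern of "which base vertex the apex shares a color with" is constant.
\item If the apex always matches some fixed base vertex \(\boldsymbol t_i\), then two distinct apices \(\boldsymbol y,\boldsymbol y'\) at the right mutual distance have the same color. Iterating this inside \(C\) makes \(C\) monochromatic, and \(C\) is chosen to contain a copy of \(T\). That gives a monochromatic \(T\), a contradiction.
\item If instead the apex matches no base vertex, then for every copy of \(T'\) inside a base fibre, the copy \(T'\) itself must be non-rainbow. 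Apply the inductive hypothesis in the strengthened form, which forces either a monochromatic \(T'\) spread across fibres (and then we extend it to a monochromatic \(T\) using Step 1), or a rainbow \(T'\) whose apex extension is rainbow, a contradiction.
\end{itemize}

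The main obstacle is the metric compatibility in Step 2. We need a single finite spherical configuration \(C\) that has three properties at once:
\begin{enumerate}
\item all its points are legal apices over a common copy of \(T'\);
\item it has the mutual distances needed so that same-colored apices propagate to a monochromatic copy of \(T\);
\item it is itself Ramsey via Frankl--R\"odl.
\end{enumerate}
The first thing to try is the following. Take \(C\) to be a copy of \(T\) scaled and placed on the apex sphere, which is possible because \(T\) is spherical and the apex sphere can have any large radius in high dimension. Then use the Frankl--R\"odl theorem for \(C\) with the bounded number of patterns as colors, keeping every quantity independent of \(r\).
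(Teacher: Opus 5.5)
There is a genuine gap, and it sits exactly where you flag the main obstacle.

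\textbf{The metric step fails.} The set of admissible apices over a fixed copy of \(T'=T\setminus\{\boldsymbol t_k\}\) is a sphere of radius exactly \(h_k=\operatorname{dist}(\boldsymbol t_k,\aff(T'))\). It is centered at the foot of the perpendicular from \(\boldsymbol t_k\) (not at the circumcenter of \(T'\)) and lies in the orthogonal complement of \(\dir(\aff(T'))\). Raising the ambient dimension raises the dimension of this sphere, not its radius, so your claim that it ``can have any large radius'' is false.
\begin{itemize}
\item Any configuration \(C\) on this sphere that contains a copy of \(T\) forces \(\crad(T)\le h_k\). This fails in general: the triangle with vertices \((0,0),(1,0),(\tfrac12,\varepsilon)\) has circumradius about \(1/(8\varepsilon)\), while all its heights are below \(2\varepsilon\).
\item The Frankl--R\"odl witness \(Y\) would itself have to lie on this sphere, and that theorem gives no guarantee that \(Y\) fits on a sphere of radius \(h_k\).
\item Shrinking \(T\) makes it fit, but then Step 3 only yields a monochromatic \emph{scaled} copy of \(T\), and you have no mechanism to recover \(T\).
\end{itemize}
This radius-versus-height restriction is exactly what limited the earlier partial results.

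\textbf{The induction has a second hole.} When the apex matches no base vertex, the inductive hypothesis at best gives a monochromatic \(T'\) of some color \(c\). Nothing forces an apex over it to have color \(c\): all apices may avoid \(c\), and the bounded pattern coloring gives no leverage on this. The strengthened hypothesis meant to handle this case is never formulated.

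\textbf{How the paper avoids both problems.} It does not induct on \(k\).
\begin{itemize}
\item It builds a tree-like simplex \(R\) in which every root-to-leaf path is a copy of \(T\), and the children of each node form a copy of a simplex \(B\) with \(B\xrightarrow{k-1}A\), where \(A=\frac{1}{\sqrt m}T\). A rainbow path is grown greedily from a single root. If it cannot be extended, the children of its endpoint carry at most \(k-1\) colors and hence contain a monochromatic \(A\).
\item The fit on the apex spheres comes from shrinking. One first obtains a \emph{simplex} witness \(B_0\xrightarrow{k-1}T\), using the contraction lemma, Frankl--R\"odl, and a lift by a regular simplex that makes the witness affinely independent, so it has a circumradius. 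Scaling by \(1/\sqrt m\) then gives \(\crad(B)<h_*\), so \(c_j=(h_j^2-\rho^2)^{1/2}>0\).
\item The price, a monochromatic \(A\) instead of \(T\), is repaid by the product lemma. Set \(X_{s+1}=Q_s\times X_s\) with \(Q_s\xrightarrow{M_s}R\), where \(M_s\) counts positions of \(A^{\times s}\) in \(X_s\) and does not depend on the number of colors. This gives \(X_m\canarrow(A^{\times m};T)\), and \(A^{\times m}\) contains the diagonal copy of \(T\).
\end{itemize}
Your bounded-pattern idea matches the two bounded quantities used there: the \(k-1\) path colors and the \(M_s\) positions. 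What is missing is the scaling-plus-diagonal device.
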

The quantitative results on \(n_{0}(T)\) produced by the proof are large, obtaining quantitative canonical Ramsey bounds for simplices remains an interesting problem. For example, when \(|T|=3\), it was shown in~\cite{fang2025} that \(n_{0}(T)\) can be \(4\).

\section{Proof of the main result}

We first list some geometric notation used in the proof. For every nonempty finite set \(X=\{\boldsymbol{x}_{1},\ldots,\boldsymbol{x}_{s}\}\), its affine hull is
\[
\aff(X)=\left\{\sum_{i=1}^{s}\alpha_{i}\boldsymbol{x}_{i}:\alpha_{1},\ldots,\alpha_{s}\in\R,\sum_{i=1}^{s}\alpha_{i}=1\right\}.
\]
If \(B=\{\boldsymbol{b}_{1},\ldots,\boldsymbol{b}_{N}\}\) is a nondegenerate simplex, then its circumcenter is the unique point \(\boldsymbol{o}_{B}\in\aff(B)\) equidistant from all vertices of \(B\). We denote this common distance, the circumradius of \(B\), by \(\crad(B)=\rho_{B}\). Thus the circumsphere of \(B\) is \(\{\boldsymbol{x}\in\aff(B):\lVert\boldsymbol{x}-\boldsymbol{o}_{B}\rVert=\rho_{B}\}\). Notice that \(\rho_{B}\) need not equal the radius of the smallest closed ball containing \(B\). We use \(\operatorname{dist}(\boldsymbol{x},L)=\inf\{\lVert\boldsymbol{x}-\boldsymbol{y}\rVert:\boldsymbol{y}\in L\}\) for the distance from a point to a nonempty set. For an affine subspace \(L\), its direction space is \(\dir(L)=\{\boldsymbol{x}-\boldsymbol{y}:\boldsymbol{x},\boldsymbol{y}\in L\}\). For \(\alpha>0\), write \(\alpha X=\{\alpha\boldsymbol{x}:\boldsymbol{x}\in X\}\). Products such as \(X\times Y\) are always orthogonal Cartesian products, and \(A^{\times s}\) denotes the product of \(s\) copies of \(A\). We next list several useful tools.

\subsection{Some useful tools}

We shall use the following finite form of the simplex Ramsey theorem of Frankl and R\"odl~\cite{frankl1990}.

\begin{theorem}[\cite{frankl1990}]\label{thm:frankl-rodl}
For every nondegenerate simplex \(S\) and every positive integer \(q\), there is a finite configuration \(X\) such that \(X\xrightarrow{q}S\). In particular, every nondegenerate simplex is Euclidean Ramsey.
\end{theorem}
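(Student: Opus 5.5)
We take the Euclidean Ramsey property of nondegenerate simplices from Frankl and R\"odl~\cite{frankl1990} as given: for every \(q\) there is an \(n\) such that every \(q\)-coloring of \(\E^{n}\) contains a monochromatic copy of \(S\). The plan is to extract a finite witness \(X\subset\E^{n}\) from this by a standard compactness argument. The ``in particular'' clause then holds trivially in the reverse direction. If \(X\xrightarrow{q}S\) with \(X\subset\E^{m}\), then \(X\) embeds isometrically in every \(\E^{n}\) with \(n\ge m\). Any \(q\)-coloring of \(\E^{n}\) restricts to a \(q\)-coloring of this copy of \(X\), so it contains a monochromatic copy of \(S\).

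For the finite form, fix \(q\) and the dimension \(n=n(S,q)\) given by Frankl--R\"odl. Suppose, for contradiction, that every finite \(X\subset\E^{n}\) admits a \(q\)-coloring with no monochromatic copy of \(S\). Consider the space \(\Omega=[q]^{\E^{n}}\) of all \(q\)-colorings of \(\E^{n}\), with the product topology. By Tychonoff's theorem \(\Omega\) is compact. For each finite \(X\subset\E^{n}\), let \(C_X\subseteq\Omega\) be the set of colorings whose restriction to \(X\) has no monochromatic copy of \(S\) lying inside \(X\).

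Two properties of \(C_X\) are needed. First, \(C_X\) is closed: membership depends only on the finitely many coordinates indexed by \(X\), so \(C_X\) is a finite union of cylinder sets, which are clopen. Second, \(C_X\) is nonempty by assumption, since any bad coloring of \(X\) extends arbitrarily to \(\E^{n}\).

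The family \(\{C_X\}\) has the finite intersection property, because \(C_{X_1}\cap\cdots\cap C_{X_k}\supseteq C_{X_1\cup\cdots\cup X_k}\neq\emptyset\). By compactness there is a coloring \(\chi\in\bigcap_X C_X\). Every copy \(S'\) of \(S\) in \(\E^{n}\) is a finite set, and \(\chi\in C_{S'}\), so \(S'\) is not monochromatic under \(\chi\). This contradicts the choice of \(n\), so some finite \(X\) satisfies \(X\xrightarrow{q}S\).

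There is no real obstacle here. The only point requiring care is that ``contains a monochromatic copy of \(S\)'' is witnessed by a finite set of coordinates, which is what makes each \(C_X\) closed. If one prefers to avoid Tychonoff, the same conclusion follows from a K\H{o}nig-type argument over countable dense subsets, but this is less clean, because copies of \(S\) need not lie in a fixed countable set. I would therefore use the product-topology argument.
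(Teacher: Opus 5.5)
Your argument is correct, but it obtains the finite form by a different route from the paper. You start from the weakest, infinite form of the Frankl--R\"odl theorem: every \(q\)-coloring of \(\E^{n}\) contains a monochromatic copy of \(S\). You then extract a finite witness by compactness. The clopenness of each \(C_X\), the finite intersection property via \(C_{X_1\cup\cdots\cup X_k}\), and the final contradiction are all sound. So is your treatment of the ``in particular'' clause by restriction.

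The paper instead starts from the stronger density (super-Ramsey) form. There are \(\delta_S>0\) and finite sets \(X_n\subseteq\E^n\) in which every \(S\)-free subset has fewer than \(\abs{X_n}(1+\delta_S)^{-n}\) points. Pigeonhole with \(q(1+\delta_S)^{-n}<1\) then forces some color class of \(X_n\) to contain a copy of \(S\).

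Your route needs less input and is fully general: for any finite configuration, Ramseyness of \(\E^n\) is equivalent to the existence of a finite witness inside \(\E^n\). However, it is non-effective. It says nothing about \(\abs{X}\), and it relies on Tychonoff's theorem for an uncountable product of finite spaces, which is a (weak) choice principle. The paper's route is explicit and choice-free. It also keeps the quantitative information that \(n\) may be taken of order \(\log q\), i.e.\ exponential Ramseyness.

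The later lemmas (Lemmas~\ref{lem:simplex-witness} and~\ref{claim:product}) only use the existence of some finite \(X\), so either derivation serves the paper. Your closing remark about a K\H{o}nig-type alternative is unnecessary and, as stated, unconvincing. The Ramsey property of \(\E^n\) does not obviously pass to any countable subset, so you can simply drop that remark.
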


We remark that Lemma~\ref{thm:frankl-rodl} follows directly from the super-Ramsey form proved by Frankl and R\"odl. For every nondegenerate simplex \(S\), there is a constant \(\delta_{S}>0\) and, for every sufficiently large \(n\), a finite configuration \(X_{n}\subseteq\E^{n}\) such that every subset \(Y\subseteq X_{n}\) containing no copy of \(S\) satisfies
\[
\abs{Y}<\abs{X_{n}}(1+\delta_{S})^{-n}.
\]
Given \(q\), choose a sufficiently large \(n\) such that \(q(1+\delta_{S})^{-n}<1\). If every color class of a \(q\)-coloring of \(X_{n}\) were \(S\)-free, their total cardinality would be strictly smaller than \(\abs{X_{n}}\), a contradiction. Hence \(X_{n}\xrightarrow{q}S\).

We also use the following standard contraction from the work of Frankl and R\"odl~\cite{frankl1990}. We include a short Gram-matrix proof to make the geometric dependence transparent.

\begin{lemma}[\cite{frankl1990}]\label{lem:contraction}
Let \(A=\{\boldsymbol{a}_{1},\ldots,\boldsymbol{a}_{k}\}\) be a nondegenerate simplex with \(k\ge2\). For every sufficiently small \(\lambda>0\), there is a nondegenerate simplex \(A^{-}=\{\boldsymbol{a}^{-}_{1},\ldots,\boldsymbol{a}^{-}_{k}\}\) such that
\[
\lVert\boldsymbol{a}^{-}_{i}-\boldsymbol{a}^{-}_{j}\rVert^{2}=\lVert\boldsymbol{a}_{i}-\boldsymbol{a}_{j}\rVert^{2}-\lambda^{2}
\]
for all \(1\le i<j\le k\).
\end{lemma}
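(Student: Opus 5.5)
We will prove the lemma with Gram matrices. The idea is to fix a base vertex, write the squared edge lengths of \(A^{-}\) as a Gram matrix, and check that this matrix remains positive definite when \(\lambda\) is small.

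Translate so that \(\boldsymbol{a}_{k}=\boldsymbol{0}\), and put \(\boldsymbol{v}_{i}=\boldsymbol{a}_{i}-\boldsymbol{a}_{k}\) for \(1\le i\le k-1\). Let \(d_{ij}=\lVert\boldsymbol{a}_{i}-\boldsymbol{a}_{j}\rVert^{2}\). The Gram matrix \(G=(\langle\boldsymbol{v}_{i},\boldsymbol{v}_{j}\rangle)_{i,j=1}^{k-1}\) satisfies
\[
G_{ij}=\tfrac12\left(d_{ik}+d_{jk}-d_{ij}\right),
\]
with the convention \(d_{ii}=0\). Since \(A\) is affinely independent, the vectors \(\boldsymbol{v}_{1},\ldots,\boldsymbol{v}_{k-1}\) are linearly independent. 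Hence \(G\) is positive definite, with smallest eigenvalue \(\mu>0\).

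Next, define the target squared distances \(d^{-}_{ij}=d_{ij}-\lambda^{2}\) for \(i\ne j\), and \(d^{-}_{ii}=0\). Form the candidate Gram matrix \(G^{-}_{ij}=\tfrac12(d^{-}_{ik}+d^{-}_{jk}-d^{-}_{ij})\).

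For \(i\ne j\) (both \(<k\)), the three pairs \(\{i,k\},\{j,k\},\{i,j\}\) are all off-diagonal. So
\[
G^{-}_{ij}=G_{ij}-\tfrac12\lambda^{2}.
\]
For \(i=j\), we get \(G^{-}_{ii}=d_{ik}-\lambda^{2}=G_{ii}-\lambda^{2}\). Therefore
\[
G^{-}=G-\tfrac{\lambda^{2}}{2}\left(I+J\right),
\]
where \(J\) is the all-ones matrix of size \(k-1\).

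The matrix \(I+J\) has operator norm \(k\). Hence every eigenvalue of \(G^{-}\) is at least \(\mu-k\lambda^{2}/2\). This is positive whenever \(\lambda^{2}<2\mu/k\), so \(G^{-}\) is positive definite for all such \(\lambda\).

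A positive definite matrix is a Gram matrix. Concretely, take a Cholesky factorization \(G^{-}=U^{\top}U\) and let \(\boldsymbol{a}^{-}_{i}\) be the \(i\)-th column of \(U\) for \(i<k\), with \(\boldsymbol{a}^{-}_{k}=\boldsymbol{0}\). These vectors lie in \(\E^{k-1}\) and have Gram matrix \(G^{-}\).

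We then recover the distances from the Gram matrix:
\[
\lVert\boldsymbol{a}^{-}_{i}-\boldsymbol{a}^{-}_{j}\rVert^{2}=G^{-}_{ii}+G^{-}_{jj}-2G^{-}_{ij}=d^{-}_{ij}=d_{ij}-\lambda^{2},
\]
and \(\lVert\boldsymbol{a}^{-}_{i}\rVert^{2}=G^{-}_{ii}=d_{ik}-\lambda^{2}\). Nondegeneracy of the Gram matrix means the \(\boldsymbol{a}^{-}_{i}\) (\(i<k\)) are linearly independent. Hence \(A^{-}\) is affinely independent, i.e. a nondegenerate simplex.

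The only step that needs care is the bookkeeping of the \(\lambda^{2}\) terms on and off the diagonal, which gives the perturbation \(\tfrac{\lambda^{2}}{2}(I+J)\). Once that is settled, the eigenvalue bound is routine. The resulting threshold for ``sufficiently small'' is \(\lambda<\sqrt{2\mu/k}\), which depends only on \(A\).
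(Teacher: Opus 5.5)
Your proof is correct and is essentially the paper's argument: both reduce to the Gram matrix \(G-\frac{\lambda^{2}}{2}(I+J)\) of the edge vectors from a base vertex and use its positive definiteness for small \(\lambda\). The only differences are cosmetic: you base at \(\boldsymbol{a}_{k}\) instead of \(\boldsymbol{a}_{1}\), and you make the threshold explicit via \(\lVert I+J\rVert=k\), which gives \(\lambda^{2}<2\mu/k\).
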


\begin{proof}[Proof of Lemma~\ref{lem:contraction}]
Translate \(A\) so that \(\boldsymbol{a}_{1}=\boldsymbol{0}\), and let \(G\) be the positive definite Gram matrix of \(\boldsymbol{a}_{2},\ldots,\boldsymbol{a}_{k}\). Let \(I\) and \(J\) be, respectively, the identity and all-ones matrices of order \(k-1\). For sufficiently small \(\lambda>0\), the matrix
\[
G_{\lambda}=G-\frac{\lambda^{2}}{2}(I+J)
\]
remains positive definite. Choose linearly independent vectors \(\boldsymbol{a}^{-}_{2},\ldots,\boldsymbol{a}^{-}_{k}\) with Gram matrix \(G_{\lambda}\), and put \(\boldsymbol{a}^{-}_{1}=\boldsymbol{0}\).

The diagonal entries of \(G_{\lambda}\) are those of \(G\) minus \(\lambda^{2}\), while its off-diagonal entries are those of \(G\) minus \(\frac{\lambda^{2}}{2}\). Thus the required equality holds for every pair involving \(\boldsymbol{a}^{-}_{1}\). For \(2\le i<j\le k\), the polarization identity gives
\[
(G_{\lambda})_{ii}+(G_{\lambda})_{jj}-2(G_{\lambda})_{ij}=G_{ii}+G_{jj}-2G_{ij}-\lambda^{2}.
\]
Hence the squared distance also decreases by exactly \(\lambda^{2}\) for these pairs. Positive definiteness of \(G_{\lambda}\) gives the nondegeneracy of \(A^{-}\).
\end{proof}

The next result about contraction is of particular importance, which allows an ordinary finite Ramsey witness to be lifted to a witness that is itself a simplex.

\begin{lemma}\label{lem:simplex-witness}
Let \(A\) be a nondegenerate simplex with at least two vertices, and let \(q\) be a positive integer. There is a nondegenerate simplex \(B\) such that \(B\xrightarrow{q}A\).
\end{lemma}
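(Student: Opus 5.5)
Starting from Theorem~\ref{thm:frankl-rodl} we get some finite configuration \(X\) with \(X\xrightarrow{q}A^{-}\). The plan is to replace \(X\) by an affinely independent set, using Lemma~\ref{lem:contraction} to absorb the distortion this causes. Concretely, we fix a small \(\lambda>0\) for which Lemma~\ref{lem:contraction} yields the contracted simplex \(A^{-}\), so that all squared distances of \(A^{-}\) are those of \(A\) minus \(\lambda^{2}\). We then apply Theorem~\ref{thm:frankl-rodl} to \(A^{-}\) and \(q\), obtaining a finite \(X=\{\boldsymbol{x}_{1},\ldots,\boldsymbol{x}_{N}\}\subseteq\E^{m}\) with \(X\xrightarrow{q}A^{-}\).

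Next we lift \(X\) into \(\E^{m}\times\E^{N}\). With \(\boldsymbol{e}_{1},\ldots,\boldsymbol{e}_{N}\) an orthonormal basis of \(\E^{N}\), we set
\[
\boldsymbol{b}_{i}=\Bigl(\boldsymbol{x}_{i},\tfrac{\lambda}{\sqrt2}\boldsymbol{e}_{i}\Bigr),\qquad B=\{\boldsymbol{b}_{1},\ldots,\boldsymbol{b}_{N}\}.
\]
For \(i\ne j\) this gives
\[
\lVert\boldsymbol{b}_{i}-\boldsymbol{b}_{j}\rVert^{2}=\lVert\boldsymbol{x}_{i}-\boldsymbol{x}_{j}\rVert^{2}+\lambda^{2}.
\]

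We must check that \(B\) is a nondegenerate simplex. Suppose \(\sum\alpha_{i}\boldsymbol{b}_{i}=\boldsymbol{0}\) with \(\sum\alpha_{i}=0\). The second coordinates read \(\sum\alpha_{i}\tfrac{\lambda}{\sqrt2}\boldsymbol{e}_{i}=\boldsymbol{0}\), which forces every \(\alpha_{i}=0\). Hence \(B\) is affinely independent, and it has at least two vertices provided \(N\ge2\); this holds because \(X\) must contain a copy of \(A^{-}\) and \(k\ge2\).

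For the Ramsey property, take any \(q\)-coloring of \(B\) and transfer it to \(X\) via \(\boldsymbol{x}_{i}\mapsto\boldsymbol{b}_{i}\). This gives a monochromatic copy \(\{\boldsymbol{x}_{i_{1}},\ldots,\boldsymbol{x}_{i_{k}}\}\) of \(A^{-}\) in \(X\). Its squared pairwise distances are those of \(A\) minus \(\lambda^{2}\), and the indices \(i_{1},\ldots,i_{k}\) are distinct. So the corresponding points \(\boldsymbol{b}_{i_{1}},\ldots,\boldsymbol{b}_{i_{k}}\) have exactly the squared distances of \(A\). Since squared pairwise distances determine a finite configuration up to congruence, these points form a monochromatic copy of \(A\) in \(B\).

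The main obstacle is only bookkeeping. The copy of \(A^{-}\) must use distinct points of \(X\), which it does since \(A^{-}\) is nondegenerate. The offset \(\tfrac{\lambda}{\sqrt2}\) must be chosen so that it exactly cancels the contraction, and the choice above does this.
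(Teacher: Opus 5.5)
Your proposal is correct and is essentially the paper's proof. The points \(\tfrac{\lambda}{\sqrt2}\boldsymbol{e}_{i}\) are an explicit realization of the \(N\)-point regular simplex of edge length \(\lambda\) that the paper attaches orthogonally to the Frankl--R\"odl witness for \(A^{-}\), and your affine-independence and distance-lifting arguments are the same as the paper's.
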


\begin{proof}[Proof of Lemma~\ref{lem:simplex-witness}]
Choose \(\lambda>0\) and \(A^{-}\) as in Lemma~\ref{lem:contraction}. By Theorem~\ref{thm:frankl-rodl}, there is a finite configuration \(Y=\{\boldsymbol{y}_{1},\ldots,\boldsymbol{y}_{N}\}\) satisfying \(Y\xrightarrow{q}A^{-}\).

In a Euclidean space orthogonal to the one containing \(Y\), take an \(N\)-point regular simplex \(Z=\{\boldsymbol{z}_{1},\ldots,\boldsymbol{z}_{N}\}\) of edge length \(\lambda\). Define \(\boldsymbol{b}_{i}=(\boldsymbol{y}_{i},\boldsymbol{z}_{i})\) and \(B=\{\boldsymbol{b}_{1},\ldots,\boldsymbol{b}_{N}\}\). The set \(B\) is affinely independent. Indeed, an affine relation \(\sum_{i=1}^{N}\alpha_{i}\boldsymbol{b}_{i}=\boldsymbol{0}\) with \(\sum_{i=1}^{N}\alpha_{i}=0\) projects to \(\sum_{i=1}^{N}\alpha_{i}\boldsymbol{z}_{i}=\boldsymbol{0}\), so every \(\alpha_{i}\) is zero.

Given a \(q\)-coloring of \(B\), assign to each \(\boldsymbol{y}_{i}\) the color of \(\boldsymbol{b}_{i}\). There are indices \(i_{1},\ldots,i_{k}\) for which \(\{\boldsymbol{y}_{i_{1}},\ldots,\boldsymbol{y}_{i_{k}}\}\) is a monochromatic copy of \(A^{-}\). Relabel them so that \(\boldsymbol{y}_{i_{u}}\) corresponds to \(\boldsymbol{a}^{-}_{u}\). For \(1\le u<v\le k\),
\[
\lVert\boldsymbol{b}_{i_{u}}-\boldsymbol{b}_{i_{v}}\rVert^{2}=\lVert\boldsymbol{y}_{i_{u}}-\boldsymbol{y}_{i_{v}}\rVert^{2}+\lVert\boldsymbol{z}_{i_{u}}-\boldsymbol{z}_{i_{v}}\rVert^{2}=\lVert\boldsymbol{a}_{u}-\boldsymbol{a}_{v}\rVert^{2}.
\]
The corresponding vertices of \(B\) therefore form a monochromatic copy of \(A\).
\end{proof}

\subsection{Proof of Theorem~\ref{thm:main}}
We now formally provide the proof in details.
For \(k=1\), take \(W=T\). Henceforth assume \(k\ge2\), fix the ordering \(T=(\boldsymbol{t}_{1},\ldots,\boldsymbol{t}_{k})\), and we define its \(j\)-th successive height as
\[
h_{j}=\operatorname{dist}\bigl(\boldsymbol{t}_{j},\aff\{\boldsymbol{t}_{1},\ldots,\boldsymbol{t}_{j-1}\}\bigr)
\]
for \(2\le j\le k\). We use \(h_{*}=\min_{2\le j\le k}h_{j}>0\).

We first construct a tree-like simplex that forces either the desired rainbow target or a prescribed monochromatic simplex.

\begin{lemma}\label{claim:tree}
Let \(A\) and \(B\) be nondegenerate simplices. If \(B\xrightarrow{k-1}A\) and \(\crad(B)<h_{*}\), then there is a nondegenerate simplex \(R\) satisfying \(R\canarrow(A;T)\).
\end{lemma}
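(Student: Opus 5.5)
The plan is to build \(R\) as a finite ``tree'' of copies of \(B\), grown level by level, in which each root-to-leaf path realizes a copy of \(T\). At every node the coloring either extends a rainbow partial copy of \(T\) by one vertex or contains a monochromatic copy of \(A\).

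\textbf{Geometric placement fact.} Fix \(2\le j\le k\) and points \(\boldsymbol{p}_{1},\ldots,\boldsymbol{p}_{j-1}\) forming a copy of \((\boldsymbol{t}_{1},\ldots,\boldsymbol{t}_{j-1})\). Consider the points \(\boldsymbol{x}\) with \(\lVert\boldsymbol{x}-\boldsymbol{p}_{i}\rVert=\lVert\boldsymbol{t}_{j}-\boldsymbol{t}_{i}\rVert\) for all \(i<j\). In a sufficiently high-dimensional space these form a sphere of radius \(h_{j}\). It is centered at the image \(\boldsymbol{f}\) of the foot of the perpendicular from \(\boldsymbol{t}_{j}\) to \(\aff\{\boldsymbol{t}_{1},\ldots,\boldsymbol{t}_{j-1}\}\), and it lies in \(\boldsymbol{f}\) plus the orthogonal complement of \(\dir(\aff\{\boldsymbol{p}_{1},\ldots,\boldsymbol{p}_{j-1}\})\).

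Let \(\rho=\crad(B)<h_{*}\le h_{j}\). Take fresh coordinate directions, orthogonal to everything used so far, consisting of a unit vector \(\boldsymbol{u}\) and a copy of \(\dir(\aff B)\). Place a copy of \(B\) with circumcenter at \(\boldsymbol{f}+\sqrt{h_{j}^{2}-\rho^{2}}\,\boldsymbol{u}\), with its affine hull parallel to the fresh copy of \(\dir(\aff B)\). By Pythagoras every vertex \(\boldsymbol{b}\) of this copy lies on the sphere, so \(\{\boldsymbol{p}_{1},\ldots,\boldsymbol{p}_{j-1},\boldsymbol{b}\}\) is a copy of \(\{\boldsymbol{t}_{1},\ldots,\boldsymbol{t}_{j}\}\).

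\textbf{The construction.} Start with a single point \(\boldsymbol{p}_{1}\), which is level \(1\). For each partial path \((\boldsymbol{p}_{1},\ldots,\boldsymbol{p}_{j-1})\) already built with \(j\le k\), attach a new copy of \(B\) as above. Every vertex of that copy becomes a possible \(\boldsymbol{p}_{j}\), and we recurse. The set \(R\) of all points produced is finite, of size at most \(1+|B|+\dots+|B|^{k-1}\).

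\textbf{Nondegeneracy of \(R\).} Add the copies one at a time and use induction. Suppose \(\sum\alpha_{i}\boldsymbol{x}_{i}+\sum\beta_{l}\boldsymbol{b}_{l}=\boldsymbol{0}\) with total coefficient sum zero, where the \(\boldsymbol{x}_{i}\) are the old points and the \(\boldsymbol{b}_{l}\) are the vertices of the new copy. Project onto the fresh coordinates. The \(\boldsymbol{u}\)-component gives \(\bigl(\sum\beta_{l}\bigr)\sqrt{h_{j}^{2}-\rho^{2}}=0\). Here the strict inequality \(\crad(B)<h_{*}\) is exactly what is needed, since it makes this factor nonzero; hence \(\sum\beta_{l}=0\). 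The remaining fresh components then say that the \(\beta_{l}\) give an affine dependence among the vertices of \(B\), so all \(\beta_{l}=0\). The induction hypothesis then gives \(\alpha_{i}=0\) for all \(i\).

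\textbf{The coloring argument.} Take an arbitrary coloring of \(R\). We walk down the tree while maintaining a rainbow path \(\boldsymbol{p}_{1},\ldots,\boldsymbol{p}_{j-1}\) whose color set \(C\) has \(|C|=j-1\). At the copy of \(B\) attached to this path there are two cases.
\begin{itemize}
\item If every vertex of the copy has its color in \(C\), then the copy is colored with at most \(j-1\le k-1\) colors. Since \(B\xrightarrow{k-1}A\), it contains a monochromatic copy of \(A\).
\item Otherwise some vertex has a color outside \(C\). Take it as \(\boldsymbol{p}_{j}\), which extends the rainbow path.
\end{itemize}
If the walk reaches \(j=k\), the path \(\boldsymbol{p}_{1},\ldots,\boldsymbol{p}_{k}\) is a rainbow copy of \(T\). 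Hence \(R\canarrow(A;T)\).

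The main point requiring care is the affine independence of the whole tree. This rests on the strict inequality \(\crad(B)<h_{*}\) and on using genuinely fresh orthogonal directions for each attached copy.
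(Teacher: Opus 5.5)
Your proposal is correct and follows essentially the same route as the paper. It uses a tree of copies of \(B\), each placed in fresh orthogonal directions with its circumcenter lifted by \(\sqrt{h_{j}^{2}-\rho^{2}}\) above the image of the foot of the perpendicular, proves affine independence inductively from that nonzero lift component, and runs the same greedy walk that yields either a rainbow \(T\) or a copy of \(B\) colored with at most \(k-1\) colors (the paper only packages the independence step differently, via linear independence of the \(\boldsymbol{w}_{v,i}\) and a split into \(V_{\mathrm{old}}\) and \(E_{v}\) components).
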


\begin{proof}[Proof of Lemma~\ref{claim:tree}]
Write \(B=\{\boldsymbol{b}_{1},\ldots,\boldsymbol{b}_{N}\}\) and \(\rho=\crad(B)\). Translate \(B\) so that its circumcenter is the origin. Thus \(B\) lies in an \((N-1)\)-dimensional linear space and \(\lVert\boldsymbol{b}_{i}\rVert=\rho\) for every \(i\).

Let \(\mathcal{T}\) be the rooted tree with levels \(1,\ldots,k\) in which the root is the unique vertex on level \(1\) and every vertex below level \(k\) has exactly \(N\) children. For each non-leaf vertex \(v\), let \(\mathcal{C}(v)\) denote its set of children, ordered from \(1\) to \(N\). The tree has \(\sum_{j=0}^{k-1}N^{j}\) vertices and \(\sum_{j=0}^{k-2}N^{j}\) non-leaf vertices.

For every non-leaf vertex \(v\), choose an \((N-1)\)-dimensional
linear space \(U_{v}\) and a unit vector
\(\boldsymbol{e}_{v}\in U_{v}^{\perp}\), and set
\(
E_{v}=U_{v}\oplus\operatorname{span}\{\boldsymbol{e}_{v}\}.
\) Choose these spaces to be mutually orthogonal and work in their orthogonal direct sum. Its dimension is \(N\sum_{j=0}^{k-2}N^{j}\), so the entire construction takes place in a finite-dimensional Euclidean space. Assign the origin to the root, and process the non-leaf vertices level by level, in an arbitrary fixed order within each level. In particular, every vertex precedes all of its descendants.

Suppose that \(v\) lies on level \(j-1\), where \(2\le j\le k\), and that the points already assigned to the path from the root to \(v\) are \((\boldsymbol{x}_{1},\ldots,\boldsymbol{x}_{j-1})\).
Inductively, this ordered tuple is congruent to \((\boldsymbol{t}_{1},\ldots,\boldsymbol{t}_{j-1})\). Since both tuples are affinely independent and have the same pairwise distances, the correspondence \(\boldsymbol{t}_{\ell}\mapsto\boldsymbol{x}_{\ell}\) for \(1\le \ell<j\) extends uniquely to an affine isometry
\[
\varphi_{v}:\aff\{\boldsymbol{t}_{1},\ldots,\boldsymbol{t}_{j-1}\}\longrightarrow\aff\{\boldsymbol{x}_{1},\ldots,\boldsymbol{x}_{j-1}\}.
\]
Let \(\boldsymbol{p}_{j}\) be the orthogonal projection of \(\boldsymbol{t}_{j}\) onto \(\aff\{\boldsymbol{t}_{1},\ldots,\boldsymbol{t}_{j-1}\}\), and put \(\boldsymbol{p}_{v}=\varphi_{v}(\boldsymbol{p}_{j})\).

Let \(S_{\mathrm{old}}\) be the set of points assigned before the children of \(v\) are added, and let \(V_{\mathrm{old}}=\dir(\aff(S_{\mathrm{old}}))\). The path from the root to \(v\) is contained in \(S_{\mathrm{old}}\), so \(\boldsymbol{p}_{v}\in\aff(S_{\mathrm{old}})\). An induction in the processing order shows that every point in \(S_{\mathrm{old}}\) belongs to the sum of the spaces \(E_{u}\) associated with vertices \(u\) processed before \(v\): this is clear for the root, and each newly assigned point is the sum of a point in the affine hull of its ancestors and a vector in the new space \(E_{u}\). Consequently, \(E_{v}\subseteq V_{\mathrm{old}}^{\perp}\).

Place in \(U_{v}\) a centered copy \(B_{v}=\{\boldsymbol{b}_{v,1},\ldots,\boldsymbol{b}_{v,N}\}\) of \(B\), labeled according to the fixed labeling of \(B\). Since \(\rho<h_{*}\le h_{j}\), the number \(c_{j}=(h_{j}^{2}-\rho^{2})^{1/2}\) is positive. For \(1\le i\le N\), put \(\boldsymbol{w}_{v,i}=\boldsymbol{b}_{v,i}+c_{j}\boldsymbol{e}_{v}\) and assign to the \(i\)-th child of \(v\) the point \(\boldsymbol{y}_{v,i}=\boldsymbol{p}_{v}+\boldsymbol{w}_{v,i}\).

We verify the required distances. For \(1\le \ell<j\), both \(\boldsymbol{p}_{v}\) and \(\boldsymbol{x}_{\ell}\) belong to \(\aff(S_{\mathrm{old}})\), whereas \(\boldsymbol{w}_{v,i}\in E_{v}\subseteq V_{\mathrm{old}}^{\perp}\). Therefore \(\boldsymbol{p}_{v}-\boldsymbol{x}_{\ell}\) and \(\boldsymbol{w}_{v,i}\) are orthogonal. Also, \(\lVert\boldsymbol{w}_{v,i}\rVert^{2}=\rho^{2}+c_{j}^{2}=h_{j}^{2}\). Since \(\varphi_{v}\) is an isometry and \(\boldsymbol{p}_{j}\) is the orthogonal projection of \(\boldsymbol{t}_{j}\), we obtain
\[
\lVert\boldsymbol{y}_{v,i}-\boldsymbol{x}_{\ell}\rVert^{2}=\lVert\boldsymbol{p}_{v}-\boldsymbol{x}_{\ell}\rVert^{2}+h_{j}^{2}=\lVert\boldsymbol{p}_{j}-\boldsymbol{t}_{\ell}\rVert^{2}+h_{j}^{2}=\lVert\boldsymbol{t}_{j}-\boldsymbol{t}_{\ell}\rVert^{2}.
\]
Together with the induction hypothesis for the distances among \(\boldsymbol{x}_{1},\ldots,\boldsymbol{x}_{j-1}\), this shows that \((\boldsymbol{x}_{1},\ldots,\boldsymbol{x}_{j-1},\boldsymbol{y}_{v,i})\) is congruent to \((\boldsymbol{t}_{1},\ldots,\boldsymbol{t}_{j})\). Furthermore, for \(1\le i<i'\le N\), we have \(\lVert\boldsymbol{y}_{v,i}-\boldsymbol{y}_{v,i'}\rVert=\lVert\boldsymbol{b}_{v,i}-\boldsymbol{b}_{v,i'}\rVert\), so the points assigned to \(\mathcal{C}(v)\) form a copy of \(B\). Processing all non-leaf vertices completes the construction and ensures that every path from the root to level \(j\) is congruent to \((\boldsymbol{t}_{1},\ldots,\boldsymbol{t}_{j})\).

We prove, in the same processing order, that all assigned points are affinely independent. The key observation is that the common nonzero \(\boldsymbol{e}_{v}\)-component turns the affine independence of \(B_{v}\) into the linear independence of \(\boldsymbol{w}_{v,1},\ldots,\boldsymbol{w}_{v,N}\). Indeed, if \(\sum_{i=1}^{N}\alpha_{i}\boldsymbol{w}_{v,i}=\boldsymbol{0}\), then projection onto \(\R\boldsymbol{e}_{v}\) gives \(c_{j}\sum_{i=1}^{N}\alpha_{i}=0\), and hence \(\sum_{i=1}^{N}\alpha_{i}=0\). Projection onto \(U_{v}\) now gives \(\sum_{i=1}^{N}\alpha_{i}\boldsymbol{b}_{v,i}=\boldsymbol{0}\). Since \(B_{v}\) is affinely independent, every \(\alpha_{i}\) is zero.

Assume inductively that \(S_{\mathrm{old}}\) is affinely independent when the children of \(v\) are added. Consider an affine relation
\[
\sum_{\boldsymbol{x}\in S_{\mathrm{old}}}\beta_{\boldsymbol{x}}\boldsymbol{x}+\sum_{i=1}^{N}\alpha_{i}(\boldsymbol{p}_{v}+\boldsymbol{w}_{v,i})=\boldsymbol{0}
\]
whose coefficients satisfy \(\sum_{\boldsymbol{x}\in S_{\mathrm{old}}}\beta_{\boldsymbol{x}}+\sum_{i=1}^{N}\alpha_{i}=0\). Because the total sum of the coefficients is zero, the relation may be rewritten as
\[
\sum_{\boldsymbol{x}\in S_{\mathrm{old}}}\beta_{\boldsymbol{x}}(\boldsymbol{x}-\boldsymbol{p}_{v})+\sum_{i=1}^{N}\alpha_{i}\boldsymbol{w}_{v,i}=\boldsymbol{0}.
\]
Here the first sum belongs to \(V_{\mathrm{old}}\), because \(\boldsymbol{p}_{v},\boldsymbol{x}\in\aff(S_{\mathrm{old}})\), while the second belongs to \(E_{v}\subseteq V_{\mathrm{old}}^{\perp}\). Both sums must therefore vanish. The linear independence of \(\boldsymbol{w}_{v,1},\ldots,\boldsymbol{w}_{v,N}\) gives \(\alpha_{i}=0\) for every \(i\). The remaining relation is an affine relation on \(S_{\mathrm{old}}\), so every \(\beta_{\boldsymbol{x}}\) is also zero. Induction proves that the configuration \(R\) consisting of all points assigned to \(\mathcal{T}\) is a nondegenerate simplex.

Color \(R\) with colors from an arbitrary set. If the points assigned to \(\mathcal{C}(v)\) for some non-leaf vertex \(v\) contain a monochromatic copy of \(A\), then we are done. Otherwise, we construct a path from the root to level \(k\) whose assigned points have pairwise distinct colors. Start with the root, for which this property is immediate. Suppose that a path \((v_{1},\ldots,v_{j-1})\) from the root to level \(j-1\) has been chosen with this property. If every point assigned to \(\mathcal{C}(v_{j-1})\) has one of the \(j-1\) colors already used on the path, then this copy of \(B\) uses at most \(j-1\le k-1\) colors. After relabeling these colors and allowing unused colors, this is a \((k-1)\)-coloring of \(B\). The relation \(B\xrightarrow{k-1}A\) then gives a monochromatic copy of \(A\), a contradiction. Hence some \(v_{j}\in\mathcal{C}(v_{j-1})\) is assigned a color not yet used on the path. Choosing \(v_{j}\) maintains the induction invariant. After \(k-1\) extensions, the assigned points on the resulting path form a rainbow copy of \(T\). Therefore \(R\canarrow(A;T)\).
\end{proof}

By Lemma~\ref{lem:simplex-witness} with \(q=k-1\), there is a simplex \(B_{0}\) such that \(B_{0}\xrightarrow{k-1}T\). Put \(\rho_{0}=\crad(B_{0})\), and choose a positive integer \(m\) such that \(\frac{\rho_{0}}{\sqrt{m}}<h_{*}\). Set \(A=\frac{1}{\sqrt{m}}T\) and \(B=\frac{1}{\sqrt{m}}B_{0}\). Then \(B\xrightarrow{k-1}A\) and \(\crad(B)=\frac{\rho_{0}}{\sqrt{m}}<h_{*}\). By Lemma~\ref{claim:tree}, there is a nondegenerate simplex \(R\) such that \(R\canarrow(A;T)\).

We next amplify the monochromatic alternative by synchronizing the positions of monochromatic product copies.

\begin{lemma}\label{claim:product}
Let \(A\) be nonempty and let \(\abs{T}\ge2\). If a nondegenerate simplex \(R\) satisfies \(R\canarrow(A;T)\), then for every positive integer \(s\) there is a finite configuration \(X_{s}\) such that \(X_{s}\canarrow(A^{\times s};T)\).
\end{lemma}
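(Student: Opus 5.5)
The plan is to take \(X_{s}=R^{\times M}\) for a suitably large \(M\) and run a product-Ramsey argument, induction on \(s\). Induction seems cleaner, so I would try to build \(X_{s}\) recursively. For \(s=1\), take \(X_{1}=R\).

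For the inductive step, suppose \(X_{s}\canarrow(A^{\times s};T)\) with \(X_{s}\) finite. The aim is a configuration of the form
\[
X_{s+1}=X_{s}^{\times L}\times R,
\]
or more simply \(X_{s}\times R^{\times L}\) with \(L\) large. The picture to keep in mind is a grid: for each point \(\boldsymbol{r}\in R\), the fiber \(X_{s}\times\{\boldsymbol{r}\}\) is a copy of \(X_{s}\), and for each \(\boldsymbol{x}\in X_{s}\), the fiber \(\{\boldsymbol{x}\}\times R\) is a copy of \(R\).

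Fix an arbitrary coloring \(\chi\) of \(X_{s}\times R\) and assume there is no rainbow copy of \(T\). Each fiber \(X_{s}\times\{\boldsymbol{r}\}\) then contains a monochromatic copy of \(A^{\times s}\). Its position is one of finitely many embedded copies \(P\subseteq X_{s}\), and its color is some \(c\). The trouble is that different \(\boldsymbol{r}\) may give different positions \(P\), so the copies cannot be combined into a product \(P\times A\).

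This synchronization is the main obstacle: the number of colors is unbounded, so we cannot pigeonhole on colors. Positions, however, range over a finite set. The key idea is to recolor \(R\) by the position \(P(\boldsymbol{r})\) together with the color \(c(\boldsymbol{r})\). This uses an arbitrary palette, which is allowed because \(\canarrow\) permits arbitrary color sets.

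Applying \(R\canarrow(A;T)\) to this recoloring gives one of two outcomes.

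First, there may be a copy \(A'\subseteq R\) on which both \(P\) and \(c\) are constant. Then \(P\times A'\) is monochromatic of color \(c\), and it is congruent to \(A^{\times s}\times A=A^{\times(s+1)}\). This is the desired copy.

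Second, there may be a copy \(T'=\{\boldsymbol{r}_{1},\ldots,\boldsymbol{r}_{k}\}\subseteq R\) on which the pairs \((P,c)\) are pairwise distinct. This case must be converted into a rainbow copy of \(T\) inside \(X_{s}\times R\), which is delicate. If the colors \(c(\boldsymbol{r}_{i})\) are already distinct, pick a point \(\boldsymbol{a}_{i}\in P(\boldsymbol{r}_{i})\) for each \(i\). The points \((\boldsymbol{a}_{i},\boldsymbol{r}_{i})\) are rainbow, but they are congruent to \(T\) only if all the \(\boldsymbol{a}_{i}\) coincide. So the refinement should also record colors along a common point.

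Concretely, fix an enumeration of all copies \(P\) of \(A^{\times s}\) in \(X_{s}\). To avoid mismatched positions, the coloring of \(R\) should instead be by the full vector \(\boldsymbol{r}\mapsto(\chi(\boldsymbol{x},\boldsymbol{r}))_{\boldsymbol{x}\in X_{s}}\).

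A rainbow \(T'\) under this vector coloring only says that the vectors are distinct, which is again too weak. So the coloring should be by \(\chi(\boldsymbol{x}_{0},\cdot)\) for a single carefully chosen \(\boldsymbol{x}_{0}\). The first thing I would try is as follows.

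For each fixed \(\boldsymbol{x}\), the fiber \(\{\boldsymbol{x}\}\times R\) with coloring \(\chi(\boldsymbol{x},\cdot)\) contains no rainbow \(T\). So it contains a monochromatic copy \(A_{\boldsymbol{x}}\subseteq R\) of some color \(c_{\boldsymbol{x}}\). Now color \(X_{s}\) by \(\boldsymbol{x}\mapsto(A_{\boldsymbol{x}},c_{\boldsymbol{x}})\). The component \(A_{\boldsymbol{x}}\) takes finitely many values, and \(c_{\boldsymbol{x}}\) takes arbitrary values. Apply \(X_{s}\canarrow(A^{\times s};T)\) to this coloring.

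If the coloring is monochromatic on a copy \(P\) of \(A^{\times s}\), then \(P\times A_{\boldsymbol{x}}\) is monochromatic and congruent to \(A^{\times(s+1)}\), and we are done.

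If instead we get a rainbow \(T''=\{\boldsymbol{x}_{1},\ldots,\boldsymbol{x}_{k}\}\), we pass to \(R\)-coordinates. Pick a single \(\boldsymbol{r}\) and look at the points \((\boldsymbol{x}_{i},\boldsymbol{r})\); the difficulty is that the labels \((A_{\boldsymbol{x}_{i}},c_{\boldsymbol{x}_{i}})\) being distinct does not make their colors distinct. To fix this, iterate over \(L\) coordinates, \(X_{s}\times R^{\times L}\). If \(L\) exceeds the number of possible sets \(A_{\boldsymbol{x}}\) times \(k\), a pigeonhole argument across coordinates should force either equal positions with equal colors, or distinct colors \(c_{\boldsymbol{x}_{i}}\) realized at points sharing one \(R\)-coordinate, which gives a rainbow \(T\). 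Making this counting precise is the main work and the step I expect to need the most care. Throughout, the products are finite configurations, and congruence of products of copies is immediate from orthogonality.
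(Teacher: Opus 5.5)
\textbf{There is a genuine gap: the synchronization step, which is the heart of the lemma, is never carried out.} Your diagnosis is right: fibers may carry monochromatic copies of $A^{\times s}$ in different positions, colors cannot be pigeonholed, and positions range over a finite set. In your first and third attempts, however, you apply the canonical property to a label that mixes the finite datum (a position) with the unbounded datum (a color). As you note yourself, the rainbow outcome for such a label is useless, because distinct labels may carry equal colors.

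Your final remedy, passing to $X_{s}\times R^{\times L}$ and pigeonholing ``across coordinates'', is not specified, and there is no evident reason it works. Pigeonholing over the coordinates of a power does not by itself give Ramsey-type synchronization. For a two-point set $R$, the copies of $R$ in $R^{\times L}$ are exactly the edges of the $L$-dimensional cube, which is bipartite, so two colors already avoid a monochromatic copy. Your argument also never uses the hypothesis that $R$ is a nondegenerate simplex, which is where the missing ingredient comes from.

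\textbf{The missing idea.} Synchronize positions with the \emph{ordinary} Ramsey theorem, which only has to handle finitely many colors. Only afterwards use the canonical property, and only on colors.
\begin{enumerate}[(i)]
\item Let $M$ be the number of copies of $A^{\times s}$ in $X_{s}$. We have $M\ge1$ by the induction hypothesis applied to a constant coloring, using $\abs{T}\ge2$.
\item By Theorem~\ref{thm:frankl-rodl}, take a finite $Q$ with $Q\xrightarrow{M}R$, and set $X_{s+1}=X_{s}\times Q$.
\item Given a coloring with no rainbow $T$, label each $\boldsymbol{u}\in Q$ only by the position $P(\boldsymbol{u})$ of a monochromatic copy of $A^{\times s}$ in the fiber $X_{s}\times\{\boldsymbol{u}\}$. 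This is an $M$-coloring, so some copy $R'\subseteq Q$ of $R$ has a common position $F$.
\item Now color $R'$ by the color $c(\boldsymbol{u})$ of $F\times\{\boldsymbol{u}\}$. A rainbow copy $\{\boldsymbol{u}_{1},\ldots,\boldsymbol{u}_{k}\}$ of $T$ under $c$ would give, for any fixed $\boldsymbol{x}\in F$, the rainbow set $\{(\boldsymbol{x},\boldsymbol{u}_{i}):1\le i\le k\}$ in $X_{s+1}$. It is congruent to $T$ because its first coordinate is common. This is exactly the conversion that failed in your first attempt, where the positions differed.
\item Hence $R\canarrow(A;T)$ yields a $c$-monochromatic copy $A'\subseteq R'$ of $A$, and $F\times A'$ is the required monochromatic copy of $A^{\times(s+1)}$.
\end{enumerate}
This is the paper's construction, with the factors written in the other order: $X_{s+1}=Q_{s}\times X_{s}$.

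The orientation matters. Your third attempt labels points of $X_{s}$, so synchronizing there would need an ordinary Ramsey witness for $X_{s}$. The hypotheses do not provide one: Theorem~\ref{thm:frankl-rodl} applies to simplices, and $X_{s}$, being a product, is in general not a simplex.
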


\begin{proof}[Proof of Lemma~\ref{claim:product}]
We induct on \(s\). Take \(X_{1}=R\). Suppose that \(X_{s}\) has been constructed. A \emph{position} is a subset of \(X_{s}\) that is congruent to \(A^{\times s}\). Let \(\mathcal{P}_{s}\) be the finite family of all positions and put \(M_{s}=\abs{\mathcal{P}_{s}}\). We have \(M_{s}\ge1\): the constant coloring of \(X_{s}\) has no rainbow \(T\), so the induction hypothesis gives a monochromatic copy of \(A^{\times s}\).

Since \(R\) is a nondegenerate simplex, Theorem~\ref{thm:frankl-rodl} gives a finite configuration \(Q_{s}\) satisfying \(Q_{s}\xrightarrow{M_{s}}R\). Set \(X_{s+1}=Q_{s}\times X_{s}\), where the two factors lie in orthogonal spaces.

Color \(X_{s+1}\) arbitrarily and assume that it has no rainbow copy of \(T\). For every \(\boldsymbol{u}\in Q_{s}\), identify the fiber \(\{\boldsymbol{u}\}\times X_{s}\) with \(X_{s}\). The fiber has no rainbow \(T\), so choose \(F_{\boldsymbol{u}}\in\mathcal{P}_{s}\) such that \(\{\boldsymbol{u}\}\times F_{\boldsymbol{u}}\) is monochromatic, and color \(\boldsymbol{u}\) by the position \(F_{\boldsymbol{u}}\). The relation \(Q_{s}\xrightarrow{M_{s}}R\) gives a copy \(R'\subseteq Q_{s}\) of \(R\) on which this position-coloring is constant. Hence there is a single position \(F\in\mathcal{P}_{s}\) such that \(\{\boldsymbol{u}\}\times F\) is monochromatic for every \(\boldsymbol{u}\in R'\).

Let \(c(\boldsymbol{u})\) denote the color of the fiber \(\{\boldsymbol{u}\}\times F\). This is a coloring of \(R'\). It has no rainbow \(T\). Indeed, if \(\boldsymbol{u}_{1},\ldots,\boldsymbol{u}_{\abs{T}}\) formed such a copy, then for any fixed \(\boldsymbol{x}\in F\), the points \((\boldsymbol{u}_{1},\boldsymbol{x}),\ldots,(\boldsymbol{u}_{\abs{T}},\boldsymbol{x})\) would form a rainbow copy of \(T\) in \(X_{s+1}\). Identifying \(R'\) with \(R\) by a congruence, the relation \(R\canarrow(A,T)\) shows that \(c\) contains a monochromatic copy \(A'\subseteq R'\) of \(A\). Every point of \(A'\times F\) has the same color. The product of congruences from \(A\) to \(A'\) and from \(A^{\times s}\) to \(F\) is an isometry on the orthogonal product, so \(A'\times F\) is congruent to \(A^{\times(s+1)}\). This proves the induction step.
\end{proof}

Apply Lemma~\ref{claim:product} with \(s=m\). We obtain a finite configuration \(W=X_{m}\) such that \(W\canarrow(A^{\times m};T)\). The product \(A^{\times m}\) contains a diagonal copy of \(T\). Indeed, write \(\boldsymbol{a}_{i}=\frac{1}{\sqrt{m}}\boldsymbol{t}_{i}\), and let \(\boldsymbol{d}_{i}=(\boldsymbol{a}_{i},\ldots,\boldsymbol{a}_{i})\in A^{\times m}\) have \(m\) identical coordinates. For \(1\le i<j\le k\),
\[
\lVert\boldsymbol{d}_{i}-\boldsymbol{d}_{j}\rVert^{2}=m\lVert\boldsymbol{a}_{i}-\boldsymbol{a}_{j}\rVert^{2}=\lVert\boldsymbol{t}_{i}-\boldsymbol{t}_{j}\rVert^{2}.
\]
Thus \(D=\{\boldsymbol{d}_{1},\ldots,\boldsymbol{d}_{k}\}\) is congruent to \(T\).

Consider any coloring of \(W\). If it contains a rainbow \(T\), we are done. Otherwise, it contains a monochromatic copy of \(A^{\times m}\). Under a congruence from \(A^{\times m}\) to this copy, the image of \(D\) is a monochromatic copy of \(T\). Hence \(W\canarrow(T;T)\).

Finally, choose \(n_{0}\) so that \(W\) embeds in \(\E^{n_{0}}\). For \(n\ge n_{0}\), identify \(\E^{n_{0}}\) with the coordinate subspace \(\{(\boldsymbol{x},\boldsymbol{0}):\boldsymbol{x}\in\E^{n_{0}}\}\subseteq\E^{n}\). For every positive integer \(r\), restricting an arbitrary \(r\)-coloring of \(\E^{n}\) to an embedded copy of \(W\) gives a monochromatic or rainbow copy of \(T\). This proves the theorem.

\section{Concluding remarks}
The proof of Theorem~\ref{thm:main} yields more than its diagonal statement. We conclude by describing several consequences of the same ideas. 

\begin{itemize}
\item \textbf{Asymmetric form.} For any two nondegenerate simplices \(S\) and \(T\), there is a finite configuration \(W=W(S,T)\) such that \(W\canarrow(S;T)\). Indeed, after translation, place a copy \(S'\) of \(S\) in a linear space \(V_{S}\) and a copy \(T'\) of \(T\) in \(V_{T}+\boldsymbol{e}\), where \(V_{S}\), \(V_{T}\), and the direction spanned by \(\boldsymbol{e}\) are mutually orthogonal. Projection onto these three directions shows that \(U=S'\cup T'\) is affinely independent. Thus \(U\) is a nondegenerate simplex containing copies of both \(S\) and \(T\). Applying Theorem~\ref{thm:main} to \(U\), every monochromatic copy of \(U\) contains a monochromatic copy of \(S\), while every rainbow copy of \(U\) contains a rainbow copy of \(T\).

\item \textbf{Simplex witnesses.} The witness above can itself be chosen to be a nondegenerate simplex: for any two nondegenerate simplices \(S\) and \(T\), there is a nondegenerate simplex \(R=R(S,T)\) such that \(R\canarrow(S;T)\). To see this, choose a common sufficiently small \(\lambda>0\) and use Lemma~\ref{lem:contraction} to obtain \(S^{-}\) and \(T^{-}\) by decreasing every squared edge length by \(\lambda^{2}\). The asymmetric form gives a finite configuration \(Y=\{\boldsymbol{y}_{1},\ldots,\boldsymbol{y}_{N}\}\) satisfying \(Y\canarrow(S^{-};T^{-})\). In a space orthogonal to the one containing \(Y\), take an \(N\)-point regular simplex \(Z=\{\boldsymbol{z}_{1},\ldots,\boldsymbol{z}_{N}\}\) of edge length \(\lambda\). Then \(R=\{(\boldsymbol{y}_{i},\boldsymbol{z}_{i}):1\le i\le N\}\) is affinely independent, as in Lemma~\ref{lem:simplex-witness}. The second coordinates add exactly \(\lambda^{2}\) to every squared distance, so monochromatic copies of \(S^{-}\) and rainbow copies of \(T^{-}\) lift to monochromatic copies of \(S\) and rainbow copies of \(T\), respectively.

\end{itemize}

We suspect that every Euclidean Ramsey configuration is also canonically Ramsey, also see~\cite[Conjecture 1]{fang2025}. Theorem~\ref{thm:main} answers this question for every nondegenerate simplex. Notice that any Euclidean Ramsey configuration is necessarily spherical, but the proof in this paper uses affine independence at several essential points, including the contraction, the construction of simplex witnesses, and the tree-like embedding. These steps do not extend directly to affinely dependent spherical configurations. It would also be interesting to obtain useful bounds for the minimum size or ambient dimension of a canonical witness in terms of the number of vertices and geometric parameters of the target configuration.

\bibliographystyle{abbrv}
\bibliography{bib}

\end{document}